\documentclass[10pt]{article}
\usepackage{amssymb,latexsym,amsmath,epsfig,amsthm} 

\makeatletter

\renewcommand\section{\@startsection {section}{1}{\z@}
{-30pt \@plus -1ex \@minus -.2ex}
{2.3ex \@plus.2ex}
{\normalfont\normalsize\bfseries\boldmath}}

\renewcommand\subsection{\@startsection{subsection}{2}{\z@}
{-3.25ex\@plus -1ex \@minus -.2ex}
{1.5ex \@plus .2ex}
{\normalfont\normalsize\bfseries\boldmath}}

\renewcommand{\@seccntformat}[1]{\csname the#1\endcsname. }

\makeatother

\newtheorem{theorem}{Theorem}
\newtheorem{lemma}{Lemma}
\newtheorem{proposition}{Proposition}

\newtheorem*{problem}{Problem}
\theoremstyle{definition}
\newtheorem{definition}{Definition}

\begin{document}
																												
\begin{center}
\uppercase{\bf \boldmath On a Generalization of Zumkeller Numbers: Infinitude of odd $k$-IGMO numbers for all non-negative integers $k$}
\vskip 20pt
{\bf Li Ting Hon Stanford}\\
\end{center}

\centerline{\bf Abstract}
\noindent
In this paper, we introduce and investigate a novel generalization of Zumkeller numbers termed $k$-IGMO numbers, inspired by a problem proposed in the International Gamma Mathematical Olympiad (IGMO) 2025. A positive integer $n$ is defined as a $k$-IGMO number if its set of positive divisors can be partitioned into two disjoint subsets whose elements have sums differing by $k$. Under this definition, classical Zumkeller numbers correspond to the case where $k = 0$. The main result of the paper is the existence of infinitely many odd $k$-IGMO numbers for all non-negative integers $k$.

\pagestyle{myheadings}
\thispagestyle{empty}
\baselineskip=12.875pt
\vskip 30pt

	\section{Introduction}

    IGMO numbers were first described in a problem in International Gamma Mathematical Olympiad (IGMO) 2025. IGMO was a free to enter online Mathematics Olympiad organized by Global and Multicultural Mathematical Association (GAMMA), an association formed by mathematical educators, popularizers and amateur researchers \cite{IGMO}.  The problem was proposed by @Pepemaths from Instagram \cite{IGMO_numbers}, \cite{Pepemaths}. An IGMO number is defined as a positive integer whose set of positive divisors can be partitioned into two disjoint sets, such that the sum of the elements of the two sets have a difference of $5$. For example, $9$ is an IGMO number because we can partition the positive divisors of $9$ into two disjoint sets \{$1$, $3$\} and \{$9$\}, and the sum of elements of the two sets are $1+3=4$ and $9$ respectively, which have a difference of $5$.

    We generalize the concept of IGMO numbers and define a $k$-IGMO number, where $k$ is a non-negative integer, in the following way.

\begin{definition}\label{def}
  A positive integer $n$ is called a $k$-IGMO number if the set of positive divisors of $n$ can be partitioned as two disjoint sets $A$ and $B$ ($A \cup B = D_n$), such that 

  \[ \sum_{d\in A} d - \sum_{d\in B} d = k. \]
\end{definition}

    IGMO numbers originally defined in the problem in IGMO 2025 are equivalent to $5$-IGMO numbers in Definition \ref{def}. Note that $k$-IGMO numbers can also be considered as a generalization of Zumkeller numbers. A positive integer $n$ is called a Zumkeller number if the set of all positive divisors of $n$ can be partitioned into two disjoint subsets of equal sum \cite{Zumkeller_numbers}. The concept of Zumkeller numbers was first introduced in 1987 by LeVan \cite{integer_perfect}. Zumkeller published this class of numbers on the On-Line Encyclopedia of Integer Sequences (OEIS) in 2003 (sequence number A083207) \cite{OEIS} and popularized the concept. Zumkeller numbers are equivalent to a $0$-IGMO number in Definition \ref{def}.

    The aim of this article is to prove the existence of infinitely many odd $k$-IGMO numbers for all non-negative integers $k$.

    We use the following notations through the article. $\sigma(n)$ denotes the sum of positive divisors of a positive integer $n$. $D_n$ denotes the set of all positive divisors of a positive integer $n$.

 In Section \ref{sec: basic_properties}, we present and prove some basic properties of $k$-IGMO numbers.
 
    In Section \ref{sec: odd}, we prove the existence of infinitely many odd $k$-IGMO numbers for all odd positive integers $k$.
	
     In Section \ref{sec: even}, we prove the existence of infinitely many odd $k$-IGMO numbers for all even non-negative integers $k$. 
     
     In Section \ref{sec: summary}, we summarize our finding and give a concluding remark. 

	\section{Some basic properties of $k$-IGMO numbers}\label{sec: basic_properties}

    In this section, we present and prove some basic properties of $k$-IGMO numbers.

    Firstly, we give the following definition, which will be useful for our later discussion.

    \begin{definition}\label{def2}
  A non-empty set $A$ is called $(n,k)$-IGMO-able if all the elements of $A$ are positive divisors of a positive integer $n$ and

  \[ \sum_{d\in A} d  = \frac{\sigma(n)+k}{2}. \]
\end{definition}

We then present the following lemma which will be used throughout the article.

	\begin{lemma}\label{lem: existence_lemma}
If there exists a $(n,k)$-IGMO-able set, then $n$ is a $k$-IGMO number.
	\end{lemma}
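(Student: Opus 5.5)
The plan is to take the $(n,k)$-IGMO-able set itself as one part of the partition and let its complement in $D_n$ be the other part. Suppose $A$ is an $(n,k)$-IGMO-able set. By Definition \ref{def2}, every element of $A$ is a positive divisor of $n$, so $A\subseteq D_n$. Put $B = D_n\setminus A$. Then $A$ and $B$ are disjoint and $A\cup B = D_n$, which is exactly the shape of partition required in Definition \ref{def}.

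The key step is computing the sum over $B$. Since $A$ and $B$ partition $D_n$, we have $\sum_{d\in A} d + \sum_{d\in B} d = \sigma(n)$. Using the defining property of $A$, this gives
\[ \sum_{d\in B} d = \sigma(n) - \frac{\sigma(n)+k}{2} = \frac{\sigma(n)-k}{2}. \]
Subtracting the two sums then yields
\[ \sum_{d\in A} d - \sum_{d\in B} d = \frac{\sigma(n)+k}{2} - \frac{\sigma(n)-k}{2} = k, \]
so $n$ is a $k$-IGMO number.

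The argument is essentially immediate, and the only point needing care is the degenerate case where $B$ is empty. Definition \ref{def} does not require $B$ to be nonempty, so taking $A = D_n$ is permitted. This case occurs precisely when $k=\sigma(n)$, and the computation above handles it uniformly. Nonemptiness of $A$ is guaranteed by Definition \ref{def2}, although the proof does not actually need it. There is also no integrality issue: the existence of $A$ already forces $(\sigma(n)+k)/2$ to be an integer, because it equals a sum of integers.
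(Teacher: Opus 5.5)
Your proposal is correct and follows essentially the same argument as the paper: take $B = D_n \setminus A$, compute $\sum_{d\in B} d = \frac{\sigma(n)-k}{2}$, and subtract to get $k$. Your extra remarks on the case $B=\emptyset$ (exactly when $k=\sigma(n)$) and on integrality are accurate, though the paper does not address them.
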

	\begin{proof}
Suppose there exists a $(n,k)$-IGMO-able set $A$. Let $B= D_n \setminus A$, then $\sum_{d\in B} d  = \sigma(n) - \frac{\sigma(n)+k}{2} = \frac{\sigma(n)-k}{2}$. $\sum_{d\in A} d - \sum_{d\in B} d = \frac{\sigma(n)+k}{2} - \frac{\sigma(n)-k}{2}=k$. So $n$ is a $k$-IGMO number.
	\end{proof}

We present three basic properties of $k$-IGMO numbers.

	\begin{proposition}\label{proposition_1}
If $n$ is a $k$-IGMO number, then $k$ and $\sigma(n)$ have the same parity.
	\end{proposition}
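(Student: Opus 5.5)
The plan is to use the partition supplied by Definition \ref{def} directly and compare the difference of the two subset sums with their total. Suppose $n$ is a $k$-IGMO number. Then $D_n$ splits as a disjoint union $A \cup B$ with
\[ \sum_{d\in A} d - \sum_{d\in B} d = k. \]
Write $a = \sum_{d\in A} d$ and $b = \sum_{d\in B} d$. Both are non-negative integers, since each is a finite sum of positive integers, or the empty sum $0$ if one part is empty. Because $A$ and $B$ are disjoint and together cover $D_n$, every divisor of $n$ is counted exactly once, so
\[ a + b = \sigma(n). \]

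The key step is the elementary identity
\[ \sigma(n) - k = (a+b) - (a-b) = 2b. \]
This shows that $\sigma(n)-k$ is even, that is, $\sigma(n)\equiv k \pmod 2$, which is the claim. Equivalently, $\sigma(n)+k = 2a$. This also explains why $\frac{\sigma(n)+k}{2}$ in Definition \ref{def2} is an integer whenever $n$ is $k$-IGMO, so the set $A$ is in fact $(n,k)$-IGMO-able. That gives a converse to Lemma \ref{lem: existence_lemma}, though we do not need it here.

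There is no real obstacle. The only points to handle carefully are that the partition must cover all of $D_n$, so that $a+b$ is exactly $\sigma(n)$, and that the argument does not depend on which part is allowed to be empty.
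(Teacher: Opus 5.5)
Your proof is correct and is essentially the paper's argument: the paper likewise combines $\sigma(n)=\sum_{d\in A}d+\sum_{d\in B}d$ with the defining difference to get $\sigma(n)=2\sum_{d\in B}d+k$, which is your identity $\sigma(n)-k=2b$. Your side remark about the converse of Lemma~\ref{lem: existence_lemma} is not needed. It does hold, since $A$ must be non-empty because $k\ge 0$.
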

	\begin{proof}
Let $A$ and $B$ be two disjoint subsets of the set $D_n$ of positive divisors of $n$ such that $A \cup B = D_n$ and $\sum_{d\in A} d - \sum_{d\in B} d = k$, then $\sigma(n) = \sum_{d\in A} d + \sum_{d\in B} d = (\sum_{d\in B} d + k) + \sum_{d\in B} d = 2\sum_{d\in B} d + k$. So $k$ and $\sigma(n)$ have the same parity.
	\end{proof}

  	\begin{proposition}\label{proposition_2}
If $n$ is an odd $k$-IGMO number and $k$ is odd, then $n$ is a perfect square.
	\end{proposition}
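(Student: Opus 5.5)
By Proposition \ref{proposition_1}, if $n$ is a $k$-IGMO number with $k$ odd, then $\sigma(n)$ must be odd. So it suffices to prove the classical fact that, for odd $n$, $\sigma(n)$ odd forces $n$ to be a perfect square. The plan is to compute the parity of $\sigma(n)$ from the prime factorization of $n$, using multiplicativity of $\sigma$.

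Write $n = p_1^{a_1} p_2^{a_2}\cdots p_r^{a_r}$ with distinct odd primes $p_i$ and exponents $a_i \ge 1$. (If $n=1$ it is trivially a square, so assume $r\ge 1$.) Multiplicativity of $\sigma$ gives
\[ \sigma(n) = \prod_{i=1}^{r} \left(1 + p_i + p_i^2 + \cdots + p_i^{a_i}\right). \]
This product is odd exactly when every factor is odd. Each $p_i$ is odd, so every power $p_i^j$ is odd. The factor $1+p_i+\cdots+p_i^{a_i}$ is therefore a sum of $a_i+1$ odd numbers, and it is congruent to $a_i+1 \pmod 2$. It is odd precisely when $a_i$ is even.

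Hence $\sigma(n)$ odd forces every exponent $a_i$ to be even, so $n = \left(p_1^{a_1/2}\cdots p_r^{a_r/2}\right)^2$ is a perfect square. There is no real obstacle here. The only point to state explicitly is that $n$ odd guarantees all $p_i$ are odd; the argument breaks for $p=2$, since $1+2+\cdots+2^a$ is always odd.
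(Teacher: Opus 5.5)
Your proof is correct and follows essentially the same route as the paper: Proposition~\ref{proposition_1} gives $\sigma(n)$ odd, multiplicativity over the odd prime factorization reduces this to each factor $1+p_i+\cdots+p_i^{a_i}$ being odd, and a sum of $a_i+1$ odd terms is odd only when $a_i$ is even. Your explicit treatment of $n=1$ and your remark on why $p=2$ must be excluded are small additions the paper leaves implicit.
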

	\begin{proof}
Let the prime factorization of $n$ be $p_1^{\alpha_1}p_2^{\alpha_2} \dots p_m^{\alpha_m}$, where $p_1$, $p_2$, \dots, $p_m$ are odd primes, then $\sigma (n)=(1+p_1+p_1^2+ \dots +p_1^{\alpha_1})(1+p_2+p_2^2+ \dots +p_2^{\alpha_2}) \dots (1+p_m+p_m^2+ \dots +p_m^{\alpha_m})$, which is odd by Proposition \ref{proposition_1}. Since $\sigma(n)$ is odd, each individual factor $1 + p_i + p_i^2 + \dots + p_i^{\alpha_i}$ for $i=1,2,\dots,m$ must be odd. Because each $p_i$ is an odd prime, every power of $p_i$ is also odd. The sum $1 + p_i + p_i^2 + \dots + p_i^{\alpha_i}$ contains $\alpha_i + 1$ odd terms. For the total sum of these terms to be odd, the number of terms $\alpha_i + 1$ must be odd, which implies that each exponent $\alpha_i$ must be even. Thus, $n$ is a perfect square.
	\end{proof}

      	\begin{proposition}\label{proposition_3}
There exist infinitely many even $k$-IGMO numbers for all positive integers k.
	\end{proposition}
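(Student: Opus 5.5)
The plan is to exhibit, for each fixed positive integer $k$, an explicit infinite family of even $n$ for which a $(n,k)$-IGMO-able set exists, and then apply Lemma \ref{lem: existence_lemma}. The construction splits by the parity of $k$, as Proposition \ref{proposition_1} requires: $\sigma(n)$ must have the parity of $k$.

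The main tool is the standard ``complete sequence'' fact. Let $d_1<d_2<\dots<d_r$ be positive integers with $d_1=1$ and $d_{j+1}\le 1+\sum_{i\le j} d_i$ for every $j$. Then every integer in $[0,\sum_{i\le r} d_i]$ is a subset sum of $\{d_1,\dots,d_r\}$. I will prove this by induction on $j$. Suppose the sums of $\{d_1,\dots,d_j\}$ cover $[0,S_j]$, where $S_j=\sum_{i\le j}d_i$. Adding $d_{j+1}$ also covers $[d_{j+1},d_{j+1}+S_j]$. Since $d_{j+1}\le S_j+1$, the two intervals overlap or abut, so together they cover $[0,S_{j+1}]$.

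\emph{Case $k$ odd.} Take $n=2^m$ with $2^{m+1}-1\ge k$, which holds for all sufficiently large $m$. Then $\sigma(n)=2^{m+1}-1$ is odd, so $T=(\sigma(n)+k)/2$ is an integer with $1\le T\le\sigma(n)$. The divisors $1,2,\dots,2^m$ satisfy the complete-sequence condition, since each power of $2$ equals one plus the sum of the smaller ones. Hence $T$ is the sum of some nonempty set $A\subseteq D_n$, and $A$ is $(n,k)$-IGMO-able.

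\emph{Case $k$ even.} Take $n=3\cdot 2^m$ with $4(2^{m+1}-1)\ge k$. Then $\sigma(n)=4(2^{m+1}-1)$ is even, so $T=(\sigma(n)+k)/2$ is again an integer in $[1,\sigma(n)]$. The divisors in increasing order are $1,2,3,4,6,8,12,\dots,2^m,3\cdot 2^{m-1},3\cdot 2^m$.

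The only real work is checking the complete-sequence condition for this list. I will verify it directly for each new divisor:
\begin{itemize}
\item For $2^j$ with $j\ge 2$, the sum of the smaller divisors is at least $1+2+\dots+2^{j-1}=2^j-1$.
\item For $3\cdot 2^j$, the smaller divisors include all powers of $2$ up to $2^{j+1}$ (or up to $2^m$ at the top), and these already sum to at least $3\cdot 2^j-1$.
\item In particular, the last divisor $3\cdot 2^m$ is at most $1+(2^{m+1}-1)+3(2^m-1)$.
\end{itemize}
Granting this, $T$ is a subset sum, and Lemma \ref{lem: existence_lemma} applies.

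In both cases infinitely many values of $m$ work, and every $n$ produced is even. This gives infinitely many even $k$-IGMO numbers for every positive integer $k$.
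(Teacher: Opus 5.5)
Your proof is correct. It uses the same witnesses as the paper ($2^m$ for odd $k$, $3\cdot 2^m$ for even $k$), fed into Lemma~\ref{lem: existence_lemma}, and in the odd case your completeness check for powers of $2$ is exactly the paper's binary-expansion argument.

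The even case is where you take a different route. The paper writes the target as $3a+b$ with $b\in\{0,1,2\}$. It expands $a<2^{t+1}-1$ in binary to pick the divisors $3\cdot 2^{\alpha_i}$, and adds $b$ as the divisor $1$ or $2$. This is fully explicit and needs no inequality checks along the divisor list. However, it only ever uses the divisors $1$, $2$ and $3\cdot 2^{\alpha}$, so it can never reach sums above $3\cdot 2^{t+1}$. It therefore inherently needs $k$ small relative to $\sigma(n)$, and the paper imposes $2^t>k$.

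Your finite Brown-type lemma is the sufficiency half of the criterion the paper later invokes in Lemma~\ref{lemma: complete}. It costs a short verification on the sorted divisors, but it gives more: every integer in $[0,\sigma(n)]$ is a sum of distinct divisors. So these $n$ are $k$-IGMO for every $k\le\sigma(n)$ of the parity forced by Proposition~\ref{proposition_1}, which is the largest possible range.

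Two cosmetic fixes. First, require $m\ge 1$. For $m=0$ your families contain the odd numbers $1$ (when $k=1$) and $3$ (when $k\in\{2,4\}$), and your last bullet's inequality reduces to $2^{m+1}\ge 3$, which fails at $m=0$. Second, if you want both parts of the partition nonempty, take $\sigma(n)>k$ strictly; this still leaves infinitely many $m$.
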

	\begin{proof}
Let $t$ be a positive integer such that $2^t > k$. The positive divisors of $2^t$ are $1$, $2$, $4$, $\dots$ , $2^t$. $\sigma(2^t)=2^{t+1}-1$, which is odd.

\vskip 5pt\noindent {\tt Case 1:} $k$ is odd.

$\frac{\sigma(2^t)+k}{2}=\frac{2^{t+1}-1+k}{2}$ is a positive integer which is $<2^{t+1}-1$. By considering the binary expression of $\frac{\sigma(2^t)+k}{2}$, we can express $\frac{\sigma(2^t)+k}{2}$ as the sum of some of the positive divisors of $2^t$, i.e. $1, 2, 4, \dots, 2^t$. The binary expansion uses only powers of 2 up to $2^t$  because $\frac{\sigma(2^t)+k}{2} < 2^{t+1}-1 = \sigma(2^t)$. Hence, there exists a $(2^t,k)$-IGMO-able set. By Lemma \ref{lem: existence_lemma}, $2^t$ is a $k$-IGMO number. There exist infinitely many $2^t > k$, so there exist infinitely many even $k$-IGMO numbers.

\vskip 5pt\noindent {\tt Case 2:} $k$ is even. 

The target sum $\frac{\sigma(3 \times 2^t)+k}{2} = 2(2^{t+1}-1)+\frac{k}{2}$. Since $2^t > k$, we have $\frac{k}{2} \leq 2^{t-1} - 1 < 2^{t+1}-1$. Thus, $\frac{\sigma(3 \times 2^t)+k}{2} < 3(2^{t+1}-1)$. 

By the division algorithm, we can write $\frac{\sigma(3 \times 2^t)+k}{2} = 3a + b$, where $b \in \{0, 1, 2\}$ and $a$ is an integer satisfying $a < 2^{t+1}-1$. Because $a < 2^{t+1}$, the binary expansion of $a$ uses only powers of 2 up to $2^t$. Let $a = 2^{\alpha_1} +2^{\alpha_2} +\dots + 2^{\alpha_s}$, where $2^{\alpha_1}, 2^{\alpha_2}, \dots 2^{\alpha_s}$ are distinct elements of $\{1, 2, 4, \dots, 2^t\}$. 

We construct the set $A$ as follows:
\begin{itemize}
    \item If $b = 0$, let $A = \{3 \times 2^{\alpha_1}, 3 \times 2^{\alpha_2}, \dots, 3 \times 2^{\alpha_s}\}$.
    \item If $b \in \{1, 2\}$, let $A = \{3 \times 2^{\alpha_1}, 3 \times 2^{\alpha_2}, \dots, 3 \times 2^{\alpha_s}, b\}$.
\end{itemize}
In either case, all elements in $A$ are distinct positive divisors of $3 \times 2^t$ (since $3 \times 2^{\alpha_i} \ge 3$, they never equal $1$ or $2$). The sum of elements of $A$ is $\frac{\sigma(3 \times 2^t)+k}{2}$, so $A$ is $(3 \times 2^t, k)$-IGMO-able. By Lemma \ref{lem: existence_lemma}, $3 \times 2^t$ is a $k$-IGMO number.

	\end{proof}

	\section{Existence of infinitely many odd $k$-IGMO numbers for all odd positive integers $k$}\label{sec: odd}

In this section, we prove the existence of infinitely many odd $k$-IGMO numbers, where $k$ is odd. In fact, if $k$ is odd, $1334025 \times 9^t$ is a $k$-IGMO number for all sufficiently large positive integers $t$.

We first present a few lemmas which are useful for later discussion.

    \begin{lemma}\label{lemma: condition}

For an odd positive integer $k$, $1334025 \times 9^t$ is $k$-IGMO for all $t \in \mathbb{Z}_{\ge 0}$ if there exists a set $A_k$ that is $(1334025,k)$-IGMO-able and a set $C_k$ which satisfies all the following conditions:

\begin{enumerate}

\item $C_k \subset D_{1334025 \times 9}$,

\item $\sum_{d\in C_k} d = 470022-4k$,

\item For all $d \in A_k$, $\nexists m \in \mathbb{Z}^+$ such that $9^md \in C_k$,

\item For all $d \in C_k$, $\nexists m \in \mathbb{Z}^+$ such that $9^md \in C_k$.

\end{enumerate}

    \end{lemma}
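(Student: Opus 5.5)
The plan is to write down, for each $t$, an explicit $(1334025\times 9^t,k)$-IGMO-able set built from $A_k$ and scaled copies of $C_k$, and then apply Lemma \ref{lem: existence_lemma}. Write $N=1334025=3^2\cdot 5^2\cdot 7^2\cdot 11^2$. The candidate set is
\[
S_t \;=\; 9^t A_k \;\cup\; \bigcup_{j=0}^{t-1} 9^j C_k ,
\]
where $cX=\{cx : x\in X\}$. For $t=0$ this is just $A_k$, so that case is immediate.

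\textbf{Step 1 (the target sum).} Since $\sigma$ is multiplicative,
\[
\sigma(N9^t)=\sigma(3^{2t+2})\cdot 31\cdot 57\cdot 133=\frac{3^{2t+3}-1}{2}\cdot 235011 ,
\]
and in particular $\sigma(N)=13\cdot 235011$. Then
\[
\sigma(N9^t)-9^t\sigma(N)=235011\cdot\frac{27\cdot 9^t-1-26\cdot 9^t}{2}=235011\cdot\frac{9^t-1}{2}.
\]
Hence the amount by which the target exceeds the scaled contribution of $A_k$ is
\[
\frac{\sigma(N9^t)+k}{2}-9^t\cdot\frac{\sigma(N)+k}{2}
=\frac{9^t-1}{4}\cdot 235011-\frac{9^t-1}{2}\,k
=\frac{9^t-1}{8}\,(470022-4k).
\]
Since $\frac{9^t-1}{8}=\sum_{j=0}^{t-1}9^j$, this excess equals $\sum_{j=0}^{t-1}9^j\sum_{d\in C_k}d$ by condition 2. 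This is exactly why the hypothesis is phrased with $470022-4k$.

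\textbf{Step 2 (divisibility).} Each $9^t a$ with $a\in A_k\subset D_N$ divides $N9^t$. By condition 1, each $9^j c$ with $c\in C_k$ divides $N9^{j+1}$, which divides $N9^t$ because $j\le t-1$. So every element of $S_t$ lies in $D_{N9^t}$.

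\textbf{Step 3 (disjointness).} This is the step I expect to be the main obstacle, since the sum of $S_t$ equals the sum of the multiset union only if no element is counted twice.
\begin{itemize}
\item A collision $9^t a=9^j c$ with $j<t$ means $9^{t-j}a=c\in C_k$, which condition 3 forbids.
\item A collision $9^i c=9^j c'$ with $i<j$ means $c=9^{j-i}c'$, which condition 4 (applied to $c'$) forbids.
\item Within a single scaled copy there are no collisions, since scaling by $9^j$ is injective.
\end{itemize}

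\textbf{Conclusion.} Combining Steps 1--3, $S_t$ is a set of divisors of $N9^t$ with sum $\frac{\sigma(N9^t)+k}{2}$, so it is $(N9^t,k)$-IGMO-able. Lemma \ref{lem: existence_lemma} then shows that $1334025\times 9^t$ is a $k$-IGMO number for every $t\ge 0$.
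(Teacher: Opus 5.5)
Your proposal is correct and is essentially the paper's proof. The paper builds the same set recursively, $A_{(1334025\times 9^{t},k)}=\{9d \mid d\in A_{(1334025\times 9^{t-1},k)}\}\cup C_k$, which unrolls to your $9^tA_k\cup\bigcup_{j=0}^{t-1}9^jC_k$, and it checks divisibility, disjointness (via conditions 1, 3, 4) and the sum by induction on $t$. You instead verify all pairs of copies at once and sum the geometric series $\frac{9^t-1}{8}=\sum_{j=0}^{t-1}9^j$ directly; both rest on the same formula $\sigma(1334025\times 9^{t})=\frac{3^{2t+3}-1}{2}\cdot 235011$.
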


\begin{proof}
Suppose there exists a set $A_k$ that is $(1334025,k)$-IGMO-able and a set $C_k$ which satisfies all the aforementioned conditions. Let $A_{(1334025,k)}=A_k$. For $t \geq 1$, let $A_{(1334025 \times 9^t,k)}=\{9d \mid d \in A_{(1334025 \times 9^{t-1},k)}\} \cup C_k$. We shall prove by induction that $A_{(1334025 \times 9^t,k)}$ is $(1334025 \times 9^t,k)$-IGMO-able for all $t \in \mathbb{Z}_{\ge 0}$.

For $t=0$, $A_{(1334025,k)}=A_k$ is $(1334025,k)$-IGMO-able by definition. Assume that $A_{(1334025 \times 9^s,k)}$ is $(1334025 \times 9^s,k)$-IGMO-able for some $s \in \mathbb{Z}_{\ge 0}$. 

By our definition, any element $x \in \{9d \mid d \in A_{(1334025 \times 9^{s},k)}\}$ can be written in the form $x = 9^m d_0$ for some integer $m \ge 1$, where either $d_0 \in A_k$ or $d_0 \in C_k$. By Conditions 3 and 4, no element of this form can belong to $C_k$. Therefore, $\{9d \mid d \in A_{(1334025 \times 9^{s},k)}\}$ and $C_k$ are completely disjoint sets. Moreover, because $C_k \subset D_{1334025 \times 9}$ and $A_{(1334025 \times 9^{s},k)} \subset D_{1334025 \times 9^s}$, all elements of $A_{(1334025 \times 9^{s+1},k)}$ are positive divisors of $1334025 \times 9^{s+1}$.

Since $A_{(1334025 \times 9^s,k)}$ is $(1334025 \times 9^s,k)$-IGMO-able, we have $\sum_{d\in A_{(1334025 \times 9^s,k)}} d = \frac{\sigma(1334025 \times 9^s)+k}{2}$. We compute the sum of elements of $A_{(1334025 \times 9^{s+1},k)}$:
\begin{align*}
\sum_{d\in A_{(1334025 \times 9^{s+1},k)}} d &= 9 \bigg( \sum_{d\in A_{(1334025 \times 9^s,k)}} d \bigg) + \sum_{d\in C_k} d \\
&= 9 \bigg[ \frac{\sigma(1334025 \times 9^s)+k}{2} \bigg] + (470022-4k)\\
&= 9 \bigg[ \frac{6345297 \times 9^s - 235011 + 2k}{4} \bigg] + \frac{1880088-16k}{4}\\
&= \frac{6345297 \times 9^{s+1} - 2115099 + 18k + 1880088 - 16k}{4}\\
&= \frac{6345297 \times 9^{s+1} - 235011 + 2k}{4}\\
&= \frac{\frac{6345297 \times 9^{s+1} - 235011}{2}+k}{2}\\
&= \frac{\sigma(1334025 \times 9^{s+1})+k}{2}.
\end{align*}

Thus, $A_{(1334025 \times 9^{s+1},k)}$ is $(1334025 \times 9^{s+1},k)$-IGMO-able. By mathematical induction, $A_{(1334025 \times 9^t,k)}$ is $(1334025 \times 9^t,k)$-IGMO-able for all $t \in \mathbb{Z}_{\ge 0}$. Hence, $1334025 \times 9^t$ is a $k$-IGMO number for all $t \in \mathbb{Z}_{\ge 0}$.
\end{proof}

    \begin{lemma}\label{lemma: complete}

Let $S=\{1,3,7,11\} \ \cup \{3^{t+1} \times 11,3^t \times 5 \times 11,3^t \times 7 \times 11\ \mid t \in \mathbb{Z}_{\ge 0} \}$. All positive multiples of $11$ can be represented as a sum of distinct elements of $S$. 

    \end{lemma}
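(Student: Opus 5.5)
The plan is to divide everything by $11$ and reduce the statement to a completeness property of a sequence, which a standard greedy (Brown-type) criterion then settles. Write a positive multiple of $11$ as $11m$. The elements of $S$ divisible by $11$ are $11$ and $3^{t+1}\times 11$, $3^t\times 5\times 11$, $3^t\times 7\times 11$ for $t \ge 0$. After dividing by $11$ these become $1$ and $3^{t+1}$, $5\times 3^t$, $7\times 3^t$. The three small elements $1,3,7$ of $S$ are used only together: they sum to $11$ and act as a second copy of ``$1$'' after division by $11$. So it suffices to show that every positive integer $m$ is a sum of distinct terms of the multiset
\[ T = \{1,1,3,5,7,9,15,21,27,45,63,\dots\}, \]
that is, two copies of $1$ together with $\{3^{t+1},\,5\times 3^t,\,7\times 3^t \mid t\in\mathbb{Z}_{\ge 0}\}$. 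Any such representation lifts back to $S$ as follows: the first $1$ becomes $11$, the second $1$ becomes $1+3+7$, and every other term is multiplied by $11$. All the resulting elements of $S$ are distinct, since $1,3,7$ are not multiples of $11$ and the elements of $S$ are pairwise distinct.

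Next I will prove the elementary criterion. Let $a_1\le a_2\le\cdots$ be positive integers with $a_1=1$ and $a_{i+1}\le 1+\sum_{j\le i}a_j$ for all $i$. Then every integer in $[1,\sum_{j\le n}a_j]$ is a sum of distinct $a_j$ with $j\le n$. The proof is by induction on $n$. An integer $x$ with $\sum_{j\le n}a_j < x \le \sum_{j\le n+1}a_j$ satisfies
\[ 0 \le x-a_{n+1} \le \sum_{j\le n}a_j, \]
because $a_{n+1}\le 1+\sum_{j\le n}a_j$. So $x-a_{n+1}$ is either $0$ or representable by the inductive hypothesis, and adding $a_{n+1}$ represents $x$. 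Since the partial sums of $T$ are unbounded, this gives every positive integer.

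It remains to verify the criterion for $T$ listed in increasing order, $1,1,3,5,7,9,15,21,27,45,63,\dots$. For every $t \ge 0$, the block belonging to level $t$ appears in the order $3^{t+1}, 5\times 3^t, 7\times 3^t$, and $7\times 3^t < 3^{t+2}$, so the blocks interleave correctly. The initial terms are checked directly: the partial sums $1,2,5,10,17$ give $1\le 2$, $3\le 3$, $5\le 6$, $7\le 11$. For general $t\ge1$, the sum of all terms up to and including level $t-1$ is
\[ 2 + 15\sum_{s=0}^{t-1}3^s - 5 + 3 = \tfrac{15}{2}(3^t-1) + \dots, \]
which is at least roughly $7.5\times 3^t - 8$, while the constant correction from $3$ versus $5$ at level $0$ is a bounded term. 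This has to be compared with the next terms $3^{t+1}=3\times 3^t$, then $5\times 3^t$ against the partial sum plus $3\times 3^t$, and then $7\times 3^t$ against the partial sum plus $8\times 3^t$. Each inequality holds with ample room for $t\ge1$.

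The main obstacle is bookkeeping rather than ideas. I must get the exact closed form of the partial sums right, treating the level-$0$ irregularity (the terms $3,5,7$ together with the two $1$'s) carefully, and confirm the interleaving order. To be safe, I will check the first two levels by hand and then run the general inequality for $t\ge 2$.
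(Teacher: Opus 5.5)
Your proposal is correct and takes essentially the paper's route: divide by $11$, let $11$ and $1+3+7$ serve as two copies of $1$, and verify Brown's criterion for $1,1,3,5,7,9,15,21,27,\dots$; the only difference is that you prove the sufficiency direction of the criterion by induction instead of citing Brown. One bookkeeping correction: the level-$0$ block $3,5,7$ already fits the pattern, so the sum through level $t-1$ is exactly $\frac{15\cdot 3^t-11}{2}$ with no ``$-5+3$'' adjustment, and this still exceeds your bound $7.5\times 3^t-8$, so your inequalities hold as stated.
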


\begin{proof}
Consider the sequence $\{v_i\}$ where $v_1=v_2=1$, and for $i \geq 3$:
$$v_i=\begin{cases}
  3^{\frac{i}{3}}, & \text{if $i \equiv 0 \pmod{3}$},\\
  5 \times 3^{\frac{i-4}{3}}, & \text{if $i \equiv 1 \pmod{3}$},\\
  7 \times 3^{\frac{i-5}{3}}, & \text{if $i \equiv 2 \pmod{3}$}.
\end{cases}$$

Note that the sequence is non-decreasing. According to Brown's criterion \cite{brown}, a non-decreasing sequence of positive integers $\{u_i\}$ with $u_1=1$ is complete, i.e. every positive integer can be expressed as sum of distinct terms of the sequence, if and only if for all $i \geq 1$:
$$\sum_{j=1}^{i} u_j \geq u_{i+1} - 1.$$

It is easy to verify the cases for $1 \le i \le 5$. For $i \geq 6$,
$$\sum_{j=1}^{i} v_j=\begin{cases}
  \frac{7 \times 3^{\frac{i}{3}}-11}{2}, & \text{if $i \equiv 0 \pmod{3}$},\\
  \frac{31 \times 3^{\frac{i-4}{3}}-11}{2}, & \text{if $i \equiv 1 \pmod{3}$},\\
  \frac{15 \times 3^{\frac{i-2}{3}}-11}{2}, & \text{if $i \equiv 2 \pmod{3}$}.
\end{cases}$$

Now we check Brown's inequality $\sum_{j=1}^{i} v_j \geq v_{i+1} - 1$ for $i \geq 6$. The inequalities can be simplified to:

  $$\begin{cases}
  11 \times 3^{\frac{i}{3}-1} \geq 9, & \text{if $i \equiv 0 \pmod{3}$},\\
  17 \times 3^{\frac{i-4}{3}} \geq 9, & \text{if $i \equiv 1 \pmod{3}$},\\
  9 \times 3^{\frac{i-2}{3}} \geq 9, & \text{if $i \equiv 2 \pmod{3}$}.
  \end{cases}$$
  
These inequalities are true for $i \geq 6$. Thus, $\{v_i\}$ satisfies Brown's criterion and is a complete sequence. Every positive integer $k \in \mathbb{Z}^+$ can be expressed as a sum of a subsequence of distinct elements of $\{v_i\}$.

Let $11k$ be a positive multiple of $11$. We can express $k$ as $v_{i_1} + v_{i_2} + \dots + v_{i_j}$, for some $i_1 < i_2 < \dots < i_j$. So $11k = 11v_{i_1} + 11v_{i_2} + \dots + 11v_{i_j}$.

If $\{1, 2\} \subseteq \{i_1, i_2, \dots, i_j\}$, then the sum contains $11v_1 + 11v_2 = 11(1) + 11(1) = 22$. We substitute this pair with $1 + 3 + 7 + 11$. The remaining terms in the sum are of the form $11v_m$ for $m \geq 3$. By definition, for $m \geq 3$, each $11v_m$ is a distinct element in $\{3^{t+1} \times 11, \; 3^t \times 5 \times 11, \; 3^t \times 7 \times 11 \mid t \in \mathbb{Z}_{\ge 0} \}$, all of which are strictly greater than $11$. Thus, all elements in the final sum are distinct members of $S$.

If $\{1, 2\} \not\subseteq \{i_1, i_2, \dots, i_j\}$, then the set $\{i_1, i_2, \dots, i_j\}$ contains at most one element from $\{1, 2\}$. If it contains exactly one element from $\{1, 2\}$, it contributes a single term equal to $11$. All other terms are $11v_m$ for $m \geq 3$, which are distinct from each other and strictly greater than $11$. If it contains neither $1$ nor $2$, all terms are $11v_m$ for $m \geq 3$. In both cases, $11k$ is expressed as a sum of distinct elements of $S$.
Hence, any positive multiple of $11$ can be represented as a sum of distinct elements of $S$.
\end{proof}

        \begin{lemma}\label{lemma: k-IGMO+22}

For an odd positive integer $k=x+22y$, where $x$ is a positive odd integer $<22$ and $y$ is a positive integer, suppose there exist set $A_x$ and $C_x$ which satisfy the conditions:

\begin{enumerate}

\item $A_x$ is $(1334025,x)$-IGMO-able,

\item $C_x \subset D_{1334025 \times 9}$,

\item $\sum_{d\in C_x} d = 470022-4x$,

\item For all $d \in A_x$, $\nexists m \in \mathbb{Z}^+$ such that $9^md \in C_x$,

\item For all $d \in C_x$, $\nexists m \in \mathbb{Z}^+$ such that $9^md \in C_x$.

\item Let $S=\{1,3,7,11\} \ \cup \{3^{t+1} \times 11,3^t \times 5 \times 11,3^t \times 7 \times 11\ \mid t \in \mathbb{Z}_{\ge 0} \}$.  For all $d \in C_x$, $\nexists m \in \mathbb{Z}_{\ge 0}$ such that $9^md \in S$.

\end{enumerate}

Then there exist infinitely many odd $k$-IGMO numbers.
    \end{lemma}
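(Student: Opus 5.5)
The plan is to take $n_t = 1334025 \times 9^t$ for all sufficiently large $t$ and show that each such $n_t$ is an odd $k$-IGMO number. The $(n_t,x)$-IGMO-able set built in Lemma \ref{lemma: condition} falls short of the target sum by exactly $11y$. We make up this shortfall by adjoining a finite set of distinct elements of $S$ summing to $11y$, which exists by Lemma \ref{lemma: complete}. Since $1334025 = 3^2 \times 5^2 \times 7^2 \times 11^2$, the numbers $n_t$ are odd, and it is easy to check which elements of $S$ divide them.

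\textbf{Step 1.} Conditions 1--5 are exactly the hypotheses of Lemma \ref{lemma: condition} with $k$ replaced by $x$. Its proof therefore produces, for every $t \ge 0$, an $(n_t,x)$-IGMO-able set
\[ A_{(n_t,x)} = 9^t A_x \cup 9^{t-1} C_x \cup \dots \cup 9 C_x \cup C_x , \]
where $9^j X$ means $\{9^j d \mid d \in X\}$. Its elements sum to $\frac{\sigma(n_t)+x}{2}$. Because $k = x + 22y$, we have
\[ \frac{\sigma(n_t)+k}{2} - \frac{\sigma(n_t)+x}{2} = 11y. \]

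\textbf{Step 2.} By Lemma \ref{lemma: complete}, write $11y = \sum_{s \in T} s$ for some finite set $T \subset S$ of distinct elements. Let $e$ be the largest power of $3$ dividing any element of $T$; it depends only on $y$. Every element of $S$ has the form $3^a$, $3^a \cdot 7$, $3^a \cdot 11$, $3^a \cdot 5 \cdot 11$ or $3^a \cdot 7 \cdot 11$. Hence once $2t+2 \ge e$, every element of $T$ divides $n_t = 3^{2t+2} \cdot 5^2 \cdot 7^2 \cdot 11^2$.

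\textbf{Step 3 (disjointness, the main point).} We must show that $T$ is disjoint from $A_{(n_t,x)}$.
\begin{itemize}
\item Elements of the form $9^m d$ with $d \in C_x$ and $m \ge 0$ never lie in $S$, by Condition 6, so they avoid $T$.
\item Elements of $9^t A_x$ are divisible by $3^{2t}$. Once $2t > e$, none of them can lie in $T$.
\end{itemize}
Thus for all $t$ with $2t > e$, the set $A = A_{(n_t,x)} \cup T$ consists of distinct divisors of $n_t$. Its sum is $\frac{\sigma(n_t)+x}{2} + 11y = \frac{\sigma(n_t)+k}{2}$, so $A$ is $(n_t,k)$-IGMO-able.

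\textbf{Step 4.} By Lemma \ref{lem: existence_lemma}, $n_t$ is a $k$-IGMO number for every $t > e/2$. These are infinitely many odd integers, which proves the statement.

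The only delicate step is Step 3. Condition 6 alone controls the $C_x$-part of $A_{(n_t,x)}$ but says nothing about $A_x$. The $A_x$-part is handled instead by letting $t$ grow, so that its $3$-adic valuation exceeds that of every element of the fixed finite set $T$.
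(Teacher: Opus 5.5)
Your proof is correct and follows the paper's argument. You invoke the construction from Lemma~\ref{lemma: condition} for $x$, adjoin a set $T\subset S$ of distinct elements summing to $11y$ (Lemma~\ref{lemma: complete}), and get disjointness from Condition 6 for the $C_x$-parts and from large $t$ for the $9^tA_x$-part; the only cosmetic differences are that you separate $9^tA_x$ from $T$ by $3$-adic valuation rather than by size (the paper notes elements of $9^tA_x$ eventually exceed $\max T$), and that you make the divisibility threshold explicit via $1334025=3^2\cdot5^2\cdot7^2\cdot11^2$.
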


\begin{proof}
Suppose all the conditions are satisfied. Let $A_{(1334025,x)}=A_x$. For $t \ge 1$, let $A_{(1334025 \times 9^t,x)}=\{9d \mid d \in A_{(1334025 \times 9^{t-1},x)}\} \cup C_x$. By Lemma \ref{lemma: condition}, $A_{(1334025 \times 9^t,x)}$ is $(1334025 \times 9^t,x)$-IGMO-able for all $t \in \mathbb{Z}_{\ge 0}$.

By Lemma \ref{lemma: complete}, $11y$, which is a positive multiple of $11$, can be represented as a sum of distinct elements of $S$, so $11y = u_1+u_2 +\dots +u_i$ for distinct $u_1,u_2 ,\dots ,u_i \in S$. There exist a sufficiently large integer $t_1$ such that if $t > t_1$, then $\{u_1,u_2 ,\dots ,u_i\} \subset D_{1334025 \times 9^t}$. 

Furthermore, note that $A_{(1334025 \times 9^t,x)} = (9^tA_x) \cup C_x \cup (9C_x) \cup (9^2C_x) \cup \dots \cup (9^{t-1}C_x)$. There exists a sufficiently large integer $t_2$ such that if $t > t_2$, all the elements of $9^tA_x$ are larger than $\max(u_1,u_2 ,\dots ,u_i)$, and so $\{u_1,u_2 ,\dots ,u_i\} \cap 9^tA_x = \emptyset$. By Condition $6$, no element of $S$ can be generated by multiplying an element of $C_x$ by a non-negative power of $9$, so $\{u_1,u_2 ,\dots ,u_i\} \cap [C_x \cup (9C_x) \cup (9^2C_x) \cup \dots \cup (9^{t-1}C_x)] = \emptyset$. Overall, for all $t > t_2$, $\{u_1,u_2 ,\dots ,u_i\} \cap A_{(1334025 \times 9^t,x)} = \emptyset$.

For all $t > \max(t_1,t_2)$, let $A'_{(1334025 \times 9^t,x)} = A_{(1334025 \times 9^t,x)} \cup \{u_1,u_2 ,\dots ,u_i\}$.
\begin{align*}
\sum_{d\in A'_{(1334025 \times 9^t,x)}} d &= \sum_{d\in A_{(1334025 \times 9^t,x)}} d + \sum_{m=1}^i u_m \\
&= \frac{\sigma(1334025 \times 9^t)+x}{2} + 11y \\
&= \frac{\sigma(1334025 \times 9^t)+(x+22y)}{2} \\
&= \frac{\sigma(1334025 \times 9^t)+k}{2}.
\end{align*}

Hence, by Definition \ref{def2}, $A'_{(1334025 \times 9^t,x)}$ is $(1334025 \times 9^t,k)$-IGMO-able. This implies that for every $t > \max(t_1,t_2)$, the integer $1334025 \times 9^t$ is an odd $k$-IGMO number. Because there are infinitely many such values of $t$, there exist infinitely many odd $k$-IGMO numbers.
\end{proof}

We shall use these lemmas to prove the final result of this section.
    
    \begin{lemma}\label{lemma: IGMOodd}

There exist infinitely many odd $k$-IGMO numbers for all odd positive integers $k$.
    \end{lemma}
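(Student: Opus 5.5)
The plan is to reduce the statement to a finite verification and then invoke Lemma \ref{lemma: k-IGMO+22} and Lemma \ref{lemma: condition}. Every odd positive integer $k$ either satisfies $k<22$ or can be written as $k=x+22y$ with $x$ odd, $1\le x\le 21$, and $y\ge 1$. If $k<22$, then Lemma \ref{lemma: condition} applied with $k=x$ already gives that $1334025\times 9^t$ is $k$-IGMO for every $t\ge 0$, once suitable $A_x$ and $C_x$ are available. If $k\ge 22$, Lemma \ref{lemma: k-IGMO+22} does the job, provided the pair $(A_x,C_x)$ also satisfies Condition 6. So it suffices to exhibit, for each of the eleven odd residues $x\in\{1,3,\dots,21\}$, explicit sets $A_x$ and $C_x$ satisfying Conditions 1--6 of Lemma \ref{lemma: k-IGMO+22}.

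First I would record the arithmetic. We have $1334025=3^2\cdot5^2\cdot7^2\cdot11^2$, so
\[
\sigma(1334025)=13\cdot31\cdot57\cdot133=3055083,
\]
and the target sum for $A_x$ is $(3055083+x)/2$. The divisor set has $81$ elements and is very dense, so $A_x$ can be found greedily. I would take large divisors in decreasing order until the remainder is small, then finish with small divisors.

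For $C_x$ the target is $470022-4x$, using divisors of $1334025\times 9=3^4\cdot5^2\cdot7^2\cdot11^2$. The structural constraints are the following.
\begin{itemize}
\item Condition 5: no two elements of $C_x$ differ by a factor $9^m$.
\item Condition 4: no element of $C_x$ is $9^m d$ for $d\in A_x$.
\item Condition 6: no element of $C_x$, nor any $9^m$ multiple of it, lies in $S$.
\end{itemize}
A clean way to meet all three at once is to pick $C_x$ among divisors divisible by $3^4$. Such divisors are never in $S$: the elements of $S$ involving $3^t$ with $t\ge 4$ would need factors $11$ only with no $5$ or $7$ alongside, and these have a specific shape, so I would restrict to divisors containing $5$ or $7$ in a way that avoids $3^t\cdot5\cdot11$ and $3^t\cdot7\cdot11$. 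Moreover, $9\cdot(\text{anything in } D_{1334025})$ can hit $3^4$-divisible numbers only from $3^2$-divisible $d$. So I would also arrange that $A_x$ avoids the appropriate $3^2$-divisible divisors $d$ with $9d\in C_x$. Since $C_x\subset D_{1334025\times 9}$, the factor $9^m$ with $m\ge 2$ cannot keep elements inside, and Condition 5 reduces to avoiding pairs $d,9d$ both in $C_x$; restricting to a single $3$-adic valuation kills this automatically.

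The main obstacle is the simultaneous feasibility of the subset-sum targets under these exclusions for all eleven values of $x$. The residual small adjustments, such as hitting $470022-4x$ exactly, may force using divisors with small $3$-adic valuation, which risks collisions with $S$ or with $9A_x$. If that happens, I would first try adjusting $A_x$ to drop the conflicting $3^2$-divisible elements and compensate with other divisors of equal sum. Failing that, I would allow a few elements of $C_x$ of valuation $3$ (odd valuation, so neither $9$-related to valuation-$4$ elements nor to $9\cdot$ elements of $A_x$ beyond valuation-$1$ $d$) and check Condition 6 directly. The final proof would then present a table of the eleven pairs $(A_x,C_x)$ with their sums, verifiable by direct computation.
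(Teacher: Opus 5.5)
Your reduction matches the paper's: $k<22$ via Lemma~\ref{lemma: condition}, and $k>22$ via $k=x+22y$ and Lemma~\ref{lemma: k-IGMO+22}. So everything rests on exhibiting eleven pairs $(A_x,C_x)$ satisfying Conditions 1--6. That exhibition is the whole content of the lemma, and you do not supply it. ``The divisor set is dense, so a greedy search will succeed'' is a heuristic, not a proof. The paper simply lists the pairs (e.g.\ $A_1=\{7,25,245,2695,190575,1334025\}$, $C_1=\{5,35,49,75,385,1925,22869,444675\}$), after which each row is a finite check. Your target for $A_x$ is also wrong: $13\cdot31\cdot57\cdot133=3055143$, not $3055083$, so a search aimed at $(3055083+x)/2$ misses by $30$.

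More seriously, the strategy you outline for $C_x$ provably fails for ten of the eleven residues. Every divisor of $1334025\cdot 9$ with $3$-adic valuation at least $2$ is a multiple of $9$. Since $470022\equiv 6\pmod 9$, the condition $9\mid 470022-4x$ forces $x\equiv 6\pmod 9$, i.e.\ $x=15$. Hence neither your valuation-$4$ plan (multiples of $81$) nor the valuation-$3$ fallback (multiples of $27$) reaches any other $x$. Moreover, $3\nmid 470022-4x$ unless $3\mid x$, so for $x\in\{1,5,7,11,13,17,19\}$ the set $C_x$ must contain divisors prime to $3$, as the paper's table does.

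For such $d$, your claim that $9^m$ with $m\ge 2$ ``cannot keep elements inside'' is false, because $81d\mid 1334025\cdot 9$. So Conditions 4 and 5 must be checked for $m=2$ as well. Even in your valuation-$4$ analysis you overlooked $d\in A_x$, $81d\in C_x$ with $3\nmid d$.

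Your Condition 6 reasoning is also off: valuation $4$ does not keep you out of $S$, since $891=3^4\cdot 11$, $3^4\cdot5\cdot11$ and $3^4\cdot7\cdot11$ all lie in $S$. What actually controls Condition 6 is the $\{5,7,11\}$-part of $d$, which multiplication by $9^m$ leaves unchanged. Apart from $1,3,7$, every element of $S$ is divisible by $11$ exactly once and by at most one of $5,7$, to the first power. Hence any $d\notin\{1,3,7\}$ that is prime to $11$, or divisible by $25$, $49$, $35$ or $121$, is automatically safe. That is the mechanism behind the paper's sets, and it leaves the $3$-adic valuation free to be tuned to hit $470022-4x$ exactly.
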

    
	\begin{proof}

Refer to the following table, it is easy to verify that the following sets satisfy all the conditions of Lemma \ref{lemma: condition} and Lemma \ref{lemma: k-IGMO+22}. 

\begin{center}
\begin{tabular}{|c|c|c|}
\hline
$k$ or $x$ & $A_k$ or $A_x$ & $C_k$ or $C_x$ \\ \hline
$1$  &   $\{7,25,245,2695,190575,1334025\}$   &  $\{5,35,49,75,385,1925,22869,444675\}$    \\ \hline
$3$ &  $\{33,245,2695,190575,1334025\}$    &    $\{441,2025,22869,444675\}$  \\ \hline
$5$ &   $\{1,3,275,2695,190575,1334025\}$       &        $\{15,49,189,2205,22869,444675\}$  \\ \hline
$7$ &   $\{5,275,2695,190575,1334025\}$   &   $\{385,525,5929,8085,10395,444675\}$   \\ \hline
$9$ &   $\{1,5,275,2695,190575,1334025\}$    &    $\{825,1617,22869,444675\}$  \\ \hline
$11$  &   $\{7,275,2695,190575,1334025\}$    &  $\{5,35,189,2205,22869,444675\}$    \\ \hline
$13$ &   $\{1,7,275,2695,190575,1334025\}$   &    $\{21,25,175,2205,22869,444675\}$  \\ \hline
$15$ &  $\{9,275,2695,190575,1334025\}$    &   $\{21,45,147,2205,22869,444675\}$   \\ \hline
$17$  &   $\{1,9,275,2695,190575,1334025\}$   &    $\{5,25,175,2205,22869,444675\}$  \\ \hline
$19$ &  $\{11,275,2695,190575,1334025\}$    &    $\{15,35,147,2205,22869,444675\}$  \\ \hline
$21$  &  $\{1,11,275,2695,190575,1334025\}$    &   $\{189,2205,22869,444675\}$   \\ \hline
\end{tabular}
\end{center}

By Lemma \ref{lemma: condition}, for an odd positive integer $k < 22$, $1334025 \times 9^t$ is $k$-IGMO for all $t \in \mathbb{Z}_{\ge 0}$. Hence, there exist infinitely many odd $k$-IGMO numbers for all odd positive integers $k < 22$.

For a positive odd integer $k>22$, it can be expressed in the form of $k=x+22y$, where $x$ is a positive odd integer $<22$ and $y$ is a positive integer. By Lemma \ref{lemma: k-IGMO+22}, there exist infinitely many odd $k$-IGMO numbers.

Overall, there exist infinitely many odd $k$-IGMO numbers for all odd positive integers $k$.

        \end{proof}
        
    \section{Existence of infinitely many odd $k$-IGMO numbers for all even non-negative integers $k$}\label{sec: even}
        
	In this section, we prove the existence of infinitely many odd $k$-IGMO numbers for all even non-negative integers $k$. We require some lemmas related to Zumkeller numbers to prove our results. We give the following definition which will be useful for later discussion.

    \begin{definition}\label{IGMO-helpful}
  A positive integer $n$ is called a IGMO-helpful Zumkeller number if the set of positive divisors of $n$ can be partitioned as two disjoint sets $A$ and $B$, such that $\{1,3,5,7,9\}\subseteq A$, $15 \in B$ and $\sum_{d\in A} d = \sum_{d\in B} d$.
\end{definition}

        	\begin{lemma}\label{lem: zumkeller2}
        
If $n$ is a IGMO-helpful Zumkeller number and $w$ is a positive integer that is relatively prime to $n$, then $nw$ is also a IGMO-helpful Zumkeller number.

	\end{lemma}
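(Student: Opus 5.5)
The plan is the standard multiplicativity argument for Zumkeller numbers. Since $\gcd(n,w)=1$, every positive divisor of $nw$ can be written uniquely as $de$ with $d \in D_n$ and $e \in D_w$; that is, $D_{nw} = \{de \mid d\in D_n,\ e\in D_w\}$, with all these products distinct. Let $D_n = A \cup B$ be the partition witnessing that $n$ is an IGMO-helpful Zumkeller number, so $\{1,3,5,7,9\}\subseteq A$, $15\in B$ and $\sum_{d\in A} d=\sum_{d\in B} d$. I will define
\[ A' = \{de \mid d\in A,\ e\in D_w\}, \qquad B' = \{de \mid d\in B,\ e\in D_w\}. \]

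First, $A'$ and $B'$ partition $D_{nw}$. Their union is $D_{nw}$ because $A\cup B=D_n$. They are disjoint because if $de=d'e'$ with $d,d'\in D_n$ and $e,e'\in D_w$, then uniqueness of the factorization (coprimality: $d=\gcd(de,n)$) forces $d=d'$, and $A\cap B=\emptyset$.

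Second, the sums agree:
\[ \sum_{x\in A'} x = \sum_{d\in A}\sum_{e\in D_w} de = \sigma(w)\sum_{d\in A} d = \sigma(w)\sum_{d\in B} d = \sum_{x\in B'} x. \]
This double sum is legitimate because the map $(d,e)\mapsto de$ is injective, as just shown.

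Third, the membership conditions hold. Take $e=1\in D_w$. Then $A\subseteq A'$, so $\{1,3,5,7,9\}\subseteq A'$, and $B\subseteq B'$, so $15\in B'$. Thus $nw$ is an IGMO-helpful Zumkeller number.

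I expect no serious obstacle. The only point needing care is the injectivity and disjointness from coprimality, which I will justify via $d=\gcd(de,n)$ and $e=\gcd(de,w)$.
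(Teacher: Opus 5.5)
Your proposal is correct and follows the paper's proof, which uses the same partition $A\cup d_1A\cup\dots\cup d_iA$ and $B\cup d_1B\cup\dots\cup d_iB$ with disjointness coming from $\gcd(n,w)=1$. You also make explicit the sum identity $\sigma(w)\sum_{d\in A}d=\sigma(w)\sum_{d\in B}d$ and the check that $\{1,3,5,7,9\}\subseteq A'$ and $15\in B'$, which the paper leaves unstated.
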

	\begin{proof}
		Let $\{A,B\}$ be a IGMO-helpful Zumkeller partition of $n$. Let $1,d_1,d_2,\dots,d_i$ be all the positive divisors of $w$. $n$ and $w$ are relatively prime to each other. This ensures that 
        $A$, $(d_1A),(d_2A),\dots,(d_iA),B,(d_1B),(d_2B),\dots,(d_iB)$ are disjoint sets. Then $\{A \cup (d_1A) \cup (d_2A) \cup \dots \cup (d_iA),B\cup(d_1B)\cup(d_2B)\cup \dots \cup (d_iB)\}$ is a IGMO-helpful Zumkeller partition of $nw$. Hence, $nw$ is a IGMO-helpful Zumkeller number.
        
	\end{proof}

           	\begin{lemma}\label{lem: zumkeller3}
        
If $n$ is a IGMO-helpful Zumkeller number and $p_1^{k_1}p_2^{k_2} \dots p_m^{k_m}$ is the prime factorization of $n$. Then for any non-negative integers $l_1$, $l_2$, $\dots$, $l_m$, the integer $p_1^{k_1+l_1(k_1+1)}p_2^{k_2+l_2(k_2+1)} \dots p_m^{k_m+l_m(k_m+1)}$ is also a IGMO-helpful Zumkeller number.

	\end{lemma}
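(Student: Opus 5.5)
The plan is to mimic the proof of Lemma \ref{lem: zumkeller2}, using as multipliers suitable powers of the primes dividing $n$ instead of divisors of an integer coprime to $n$. Let $\{A,B\}$ be an IGMO-helpful Zumkeller partition of $n=p_1^{k_1}\cdots p_m^{k_m}$, and set $N=p_1^{k_1+l_1(k_1+1)}\cdots p_m^{k_m+l_m(k_m+1)}$. The key observation is that $k_i+l_i(k_i+1)=(l_i+1)(k_i+1)-1$. Hence the exponents $0,1,\dots,(l_i+1)(k_i+1)-1$ of $p_i$ allowed in a divisor of $N$ split into $l_i+1$ consecutive blocks of length $k_i+1$. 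Every exponent $f_i$ in this range can be written uniquely as $f_i=j_i(k_i+1)+e_i$ with $0\le j_i\le l_i$ and $0\le e_i\le k_i$ (division with remainder by $k_i+1$).

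First, define the set of multipliers
\[
M=\{p_1^{j_1(k_1+1)}p_2^{j_2(k_2+1)}\cdots p_m^{j_m(k_m+1)} \mid 0\le j_i\le l_i\}.
\]
By the unique decomposition above, every divisor of $N$ can be written uniquely as $\mu d$ with $\mu\in M$ and $d\in D_n$. Equivalently, $D_N$ is the disjoint union of the sets $\mu D_n$ for $\mu\in M$, and distinct pairs $(\mu,d)$ give distinct products $\mu d$. This uniqueness step plays the role that coprimality played in Lemma \ref{lem: zumkeller2}, and it is the only point needing care, since the multipliers here are not coprime to $n$.

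Next, put
\[
A'=\bigcup_{\mu\in M}\mu A, \qquad B'=\bigcup_{\mu\in M}\mu B.
\]
Since $A$ and $B$ partition $D_n$, the uniqueness statement shows that $A'$ and $B'$ are disjoint and that $A'\cup B'=D_N$. The sums then satisfy
\[
\sum_{d\in A'}d=\Big(\sum_{\mu\in M}\mu\Big)\sum_{d\in A}d=\Big(\sum_{\mu\in M}\mu\Big)\sum_{d\in B}d=\sum_{d\in B'}d .
\]

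Finally, the multiplier $\mu=1$ (all $j_i=0$) lies in $M$, so $A\subseteq A'$ and $B\subseteq B'$. Therefore $\{1,3,5,7,9\}\subseteq A'$ and $15\in B'$. Thus $\{A',B'\}$ is an IGMO-helpful Zumkeller partition of $N$. The expected main obstacle is just verifying the unique-representation claim cleanly; it reduces, one prime at a time, to the uniqueness of quotient and remainder upon division by $k_i+1$.
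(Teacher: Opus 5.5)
Your proof is correct and follows essentially the same route as the paper: write $D_N$ as a disjoint union of scaled copies of $D_n$ by powers $p_i^{j(k_i+1)}$, then take the corresponding unions of scaled copies of $A$ and $B$, with the unscaled copy preserving the conditions $\{1,3,5,7,9\}\subseteq A'$ and $15\in B'$. The only difference is that the paper handles one prime at a time and iterates, whereas you treat all primes at once through the product multiplier set $M$; this also makes explicit the unique-representation step that the paper leaves implicit.
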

	\begin{proof}
		The proof is modified from \cite[Proposition 6]{Zumkeller_numbers}. It is sufficient to show that $p_1^{k_1+l_1(k_1+1)}p_2^{k_2} \dots p_m^{k_m}$ is also a IGMO-helpful Zumkeller number, as the full result follows by iteratively applying the identical steps to each remaining prime factor $p_2, \dots, p_m$. Let $\{A,B\}$ be a IGMO-helpful Zumkeller partition of $n$. Let $D_n$ be the set of positive divisors of $n$, then the set of positive divisors of $p_1^{k_1+l_1(k_1+1)}p_2^{k_2} \dots p_m^{k_m}$ is $D_n \cup (p_1^{k_1+1}D_n) \cup (p_1^{2(k_1+1)}D_n) \cup \dots \cup (p_1^{l_1(k_1+1)}D_n)$. So $\{A \cup (p_1^{k_1+1}A) \cup (p_1^{2(k_1+1)}A) \cup \dots \cup (p_1^{l_1(k_1+1)}A) , B \cup (p_1^{k_1+1}B) \cup (p_1^{2(k_1+1)}B) \cup \dots \cup (p_1^{l_1(k_1+1)}B)\}$ is a IGMO-helpful Zumkeller partition of $p_1^{k_1+l_1(k_1+1)}p_2^{k_2} \dots p_m^{k_m}$.
                
	\end{proof}

   	\begin{lemma}\label{lem: zumkeller6}

 There exist infinitely many odd IGMO-helpful Zumkeller numbers.

	\end{lemma}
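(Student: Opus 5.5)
The plan is to exhibit one explicit odd IGMO-helpful Zumkeller number and then use Lemma \ref{lem: zumkeller2} to produce infinitely many more from it.

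\textbf{Choosing the base number.} Any IGMO-helpful Zumkeller number has $1,3,5,7,9,15$ among its divisors, so it is divisible by $3^2\cdot 5\cdot 7=315$. The smallest odd Zumkeller number is $945=3^3\cdot 5\cdot 7$, which is a multiple of $315$, so I take $n=945$ as the base. Its divisors are
\[1,3,5,7,9,15,21,27,35,45,63,105,135,189,315,945.\]
Their sum is
\[\sigma(945)=\sigma(27)\,\sigma(5)\,\sigma(7)=40\cdot 6\cdot 8=1920,\]
so each part of a Zumkeller partition must sum to $960$.

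\textbf{The partition.} The key observation is that $945+15=960$. So I set $B=\{15,945\}$ and let $A$ be the remaining $14$ divisors. Then
\[\sum_{d\in B} d=960 \quad\text{and}\quad \sum_{d\in A} d=1920-960=960.\]
By construction $15\in B$ and $\{1,3,5,7,9\}\subseteq A$. Hence $945$ is an odd IGMO-helpful Zumkeller number in the sense of Definition \ref{IGMO-helpful}. The only check needed here is the arithmetic of $\sigma(945)$.

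\textbf{Infinitely many.} For each prime $p>7$, $p$ is coprime to $945$, so Lemma \ref{lem: zumkeller2} shows that $945p$ is an IGMO-helpful Zumkeller number. Each $945p$ is odd, and distinct primes give distinct numbers, so there are infinitely many odd IGMO-helpful Zumkeller numbers. Alternatively, Lemma \ref{lem: zumkeller3} gives the infinite family $3^{3+4l}\cdot 5\cdot 7$ for $l\ge 0$.

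The only real obstacle is finding a suitable base number. The coincidence $945+15=\sigma(945)/2$ removes it at once.
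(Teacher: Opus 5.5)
Your proposal is correct and follows the paper's proof essentially verbatim: the same base number $945$ with the same partition $B=\{15,945\}$ and $A$ the remaining fourteen divisors, followed by Lemma \ref{lem: zumkeller2}. Your use of primes $p>7$ is just one concrete choice of the odd $w$ coprime to $945$ that the paper uses.
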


\begin{proof}
    $945$ is an odd IGMO-helpful Zumkeller number because the set of its positive divisors can be partitioned into disjoint sets $A= \{1,3,5,7,9,21,27,35,45,63,105,135,189,315\}$ and $B= \{15,945\}$, which satisfies the condition of IGMO-helpful Zumkeller. By Lemma \ref{lem: zumkeller2}, if $w$ is an odd positive integer that is relatively prime to $945$, then $945w$ is also an odd IGMO-helpful Zumkeller number. There exist infinitely many odd positive integers $w$ which are relatively prime to $945$, so there exist infinitely many odd IGMO-helpful Zumkeller numbers.
\end{proof}

                	\begin{lemma}\label{lem: zumkeller7}
        
 For all odd positive integers $k$, there exist infinitely many odd IGMO-helpful Zumkeller numbers which are multiples of $k$.

	\end{lemma}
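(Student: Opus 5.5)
The plan is to start from the odd IGMO-helpful Zumkeller number $945 = 3^3 \cdot 5 \cdot 7$ of Lemma \ref{lem: zumkeller6}. We then absorb the prime factors of $k$ into it using Lemma \ref{lem: zumkeller2} for primes coprime to $945$, and Lemma \ref{lem: zumkeller3} for the primes $3,5,7$ already dividing $945$.

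First, write the odd positive integer $k$ as $k = 3^{a}5^{b}7^{c}w$ with $\gcd(w,105)=1$; $w$ is odd since $k$ is odd. By Lemma \ref{lem: zumkeller3} applied to $945 = 3^3 5^1 7^1$ (exponents $k_1=3,k_2=1,k_3=1$), choose non-negative integers $l_1,l_2,l_3$ large enough that
\[ 3^{3+4l_1} \ge 3^{a},\quad 5^{1+2l_2}\ge 5^{b},\quad 7^{1+2l_3}\ge 7^{c}. \]
Then $n_0 = 3^{3+4l_1}5^{1+2l_2}7^{1+2l_3}$ is an odd IGMO-helpful Zumkeller number divisible by $3^a5^b7^c$. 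Note that Lemma \ref{lem: zumkeller3} applies to the full prime factorization of $945$, which is exactly $3^3\cdot5\cdot7$. Next, since $\gcd(w, n_0)=1$ because $n_0$ has only the prime factors $3,5,7$, Lemma \ref{lem: zumkeller2} shows that $n_0 w$ is an odd IGMO-helpful Zumkeller number. It is a multiple of $k$.

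To obtain infinitely many such numbers, there are two options. We can multiply $n_0w$ by any odd $w'$ coprime to $n_0w$, for instance distinct primes larger than $7k$, and apply Lemma \ref{lem: zumkeller2} again. Alternatively, we can let $l_1$ range over all sufficiently large integers. Either family consists of infinitely many distinct odd multiples of $k$, which proves the statement.

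The only delicate point is the primes $3,5,7$, which already divide $945$ and so cannot be handled by the coprimality lemma. This is exactly why Lemma \ref{lem: zumkeller3} is needed. The check is to confirm that the exponent sequences $3+4l$ and $1+2l$ are unbounded, so they eventually exceed $a,b,c$. Everything else is a direct application of the stated lemmas.
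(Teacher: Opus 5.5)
Your proof is correct and follows the paper's argument essentially step for step. You factor $k = 3^{a}5^{b}7^{c}w$, raise the exponents of $945$ to $3+4l_1$, $1+2l_2$, $1+2l_3$ via Lemma \ref{lem: zumkeller3}, then multiply by the coprime part $w$ via Lemma \ref{lem: zumkeller2}, with infinitude coming from the free choice of the $l_i$ (your extra option of multiplying by large primes also works).
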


\begin{proof}
Let $k=3^{\alpha}5^{\beta}7^{\gamma}s$, where $\alpha, \beta, \gamma$ are non-negative integers, and $s$ is an odd positive integer that is relatively prime to $3, 5,$ and $7$. 

Note that $945 = 3^3 \times 5 \times 7$ is an odd IGMO-helpful Zumkeller number. There exist infinitely many combinations of non-negative integers $l_1, l_2, l_3$ such that $3+4l_1 \geq \alpha$, $1+2l_2 \geq \beta$, and $1+2l_3 \geq \gamma$. By Lemma \ref{lem: zumkeller3}, $3^{3+4l_1}5^{1+2l_2}7^{1+2l_3}$ is also an odd IGMO-helpful Zumkeller number. 

Since $3^{3+4l_1}5^{1+2l_2}7^{1+2l_3}$ is composed strictly of powers of $3$, $5$ and $7$, it is relatively prime to $s$. By Lemma \ref{lem: zumkeller2}, multiplying this number by $s$ yields a new odd IGMO-helpful Zumkeller number $3^{3+4l_1}5^{1+2l_2}7^{1+2l_3}s$. Because $3+4l_1 \geq \alpha$, $1+2l_2 \geq \beta$, $1+2l_3 \geq \gamma$, and the expression includes $s$ as a factor, $3^{3+4l_1}5^{1+2l_2}7^{1+2l_3}s$ is a multiple of $k$. 

Since there are infinitely many valid combinations of $l_1, l_2,$ and $l_3$, there exist infinitely many odd IGMO-helpful Zumkeller numbers which are multiples of $k$.
\end{proof}

We shall use these lemmas to prove the final result of this section.

       \begin{lemma}\label{lemma: evenIGMO}

There exist infinitely many odd $k$-IGMO numbers for all even non-negative integers $k$.
    \end{lemma}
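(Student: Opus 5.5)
The plan is to start from an odd IGMO-helpful Zumkeller partition and move a few small divisors across it so that the sum of one side goes up by exactly $k/2$. Write $k=2m$ with $m\ge 0$. By Lemma \ref{lem: existence_lemma} it suffices to exhibit, for infinitely many odd $n$, a subset $A'\subseteq D_n$ with $\sum_{d\in A'} d=\frac{\sigma(n)}{2}+m$. If $m=0$, Lemma \ref{lem: zumkeller6} already gives infinitely many odd Zumkeller numbers, which are the odd $0$-IGMO numbers, so assume $m\ge 1$.

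\textbf{Basic move.} Let $\{A,B\}$ be an IGMO-helpful Zumkeller partition of $n$, and let $e$ be such that $e\cdot\{1,3,5,7,9\}\subseteq A$ and $15e\in B$. Let $X\subseteq\{1,3,5,7,9\}$ have sum $s$. Moving $15e$ from $B$ to $A$ and $eX$ from $A$ to $B$ raises $\sum_{d\in A}d$ by $e(15-s)$.
\begin{itemize}
\item The achievable subset sums of $\{1,3,5,7,9\}$ include every $s\in\{0,1,3,4,5,\dots,14\}$; only $s=2$ is missing below $15$.
\item Hence $15-s$ takes every value in $\{1,2,\dots,12\}\cup\{14,15\}$.
\item In particular, taking $s=14$ with $X=\{5,9\}$ gives a raise of exactly $e$.
\end{itemize}

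\textbf{Choice of $n$.} I will take $n=945w$ with $w$ odd and coprime to $945$. By Lemma \ref{lem: zumkeller2} and the explicit partition of $945$ in Lemma \ref{lem: zumkeller6}, $n$ has an IGMO-helpful Zumkeller partition of the form $\bigcup_{d\mid w} dA$ versus $\bigcup_{d\mid w} dB$. So the basic move is available with $e=d$ for every divisor $d$ of $w$. The sets $D_{945}$ and $d D_{945}$ for distinct $d\mid w$ are pairwise disjoint, so moves performed at different multipliers $d$ do not interfere.
\begin{itemize}
\item If $1\le m\le 12$, use the single move at $e=1$ with $15-s=m$. Any such $w$ works, and there are infinitely many of them.
\item If $m\ge 13$, pick $c_1\in\{1,\dots,12\}$ such that $d_0=m-c_1$ is coprime to $105$. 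Then require $d_0\mid w$, for example $w=d_0 q$ with $q$ ranging over primes larger than $945d_0$. Perform the move at $e=1$ with raise $c_1$, and the move at $e=d_0$ with $X=\{5,9\}$, raising by $d_0$. The total raise is $c_1+d_0=m$.
\end{itemize}
Since $d_0\ge 1$, and $d_0=1$ would make the two moves coincide, I also insist $c_1\le m-2$ so that $d_0>1$; then the two moves touch disjoint sets of divisors. This yields infinitely many odd $n$ that are $k$-IGMO.

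\textbf{Main obstacle.} The step I expect to be the main obstacle is guaranteeing the choice of $c_1$. I need some $c_1\in\{1,\dots,12\}$ with $m-c_1>1$ and $\gcd(m-c_1,105)=1$. This amounts to checking that every run of $12$ consecutive integers contains one coprime to $105$. That holds because the largest gap between consecutive integers coprime to $105$ is small: the Jacobsthal value for $3\cdot5\cdot7$ is well below $12$, which can be checked by a finite scan of residues mod $105$. The constraint $c_1\le m-2$ leaves at least $11$ admissible values when $m\ge 13$, which still suffices. If this check turns out tight, the fallback is to use the other achievable raises, $14$ and $15$, or a third multiplier $d$.
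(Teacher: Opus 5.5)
Your route differs from the paper's and is essentially sound, but there is a concrete gap in the case $m\ge 13$. You only require $d_0=m-c_1$ to be coprime to $105$, and that does not make $d_0$ odd. If $d_0$ is even, then $w=d_0q$ is even, contradicting your standing assumption that $w$ is odd. In that case $n=945w$ is an even $k$-IGMO number, which does not help with this lemma. For example, for $k=26$ ($m=13$) the admissible values $d_0\in\{2,\dots,12\}$ coprime to $105$ are $2,4,8,11$, and three of the four give even $n$.

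The repair is to demand $\gcd(d_0,210)=1$. The covering check you flagged is then tighter than you suggest, but still true. The admissible values of $d_0$ form a run of at least $11$ consecutive integers $\ge 2$, so they contain five consecutive odd integers $a,a+2,\dots,a+8$. Among these, at most two are divisible by $3$, exactly one by $5$, and at most one by $7$, so at least one is coprime to $210$ (equivalently, the Jacobsthal value for $210$ is $10\le 11$). For $m=13$ this is sharp: $d_0=11$, $c_1=2$ is the only choice. With this change the rest of your argument is correct: the subset sums of $\{1,3,5,7,9\}$, the replication of the IGMO-helpful pattern at each $d\mid w$, and the disjointness of the two moves.

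Compared with the paper: the paper modifies the partition using the small divisors $1,5,9,15$ together with one extra divisor equal to $k/2$ (when $k\equiv 2\pmod 4$) or $k/2+1$ (when $k\equiv 0\pmod 4$). It must therefore force $n$ to be a multiple of an arbitrary odd number, which may share the primes $3,5,7$ with $945$. That is why it needs the exponent-lifting Lemma~\ref{lem: zumkeller3} and Lemma~\ref{lem: zumkeller7}, and why the collisions $k/2+1\in\{5,9,15\}$ force the separate cases $k=8,16,28$.

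You stay inside the single family $945w$, using only Lemma~\ref{lem: zumkeller2}. You exploit the fact that the pattern $\{1,3,5,7,9\}\subseteq A$, $15\in B$ is copied at every multiplier $d\mid w$, combining a raise in $\{1,\dots,12\}$ at multiplier $1$ with a raise $d_0(15-14)=d_0$ at multiplier $d_0$. This treats all even $k\ge 2$ uniformly with no exceptional values. The price is the small sieve step above, which the paper's approach avoids entirely.
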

    
   \begin{proof}
\vskip 5pt\noindent {\tt Case 1:} $k=0$

By Lemma \ref{lem: zumkeller6}, there exist infinitely many odd IGMO-helpful Zumkeller numbers. Since a $0$-IGMO number is by definition equivalent to a Zumkeller number, these are all $0$-IGMO numbers.

\vskip 5pt\noindent {\tt Case 2:} $k=8$

By Lemma \ref{lem: zumkeller6}, there exist infinitely many odd IGMO-helpful Zumkeller numbers. Let $n$ be an odd IGMO-helpful Zumkeller number, and let $\{A,B\}$ be its corresponding partition. Define $A'=A \setminus \{1,3\}$ and $B'=B \cup \{1,3\}$. Then $A'$ and $B'$ form a valid disjoint partition of the divisors of $n$, and
$$ \sum_{d\in B'} d - \sum_{d\in A'} d = \left(\sum_{d\in B} d + 1 + 3\right) - \left(\sum_{d\in A} d - 1 - 3\right) = 8 .$$
Thus, $n$ is an $8$-IGMO number. Hence, there exist infinitely many odd $8$-IGMO numbers.

\vskip 5pt\noindent {\tt Case 3:} $k=16$

Let $A'=A \setminus \{3,5\}$ and $B'=B \cup \{3,5\}$. Then $A'$ and $B'$ are disjoint subsets partitioning the divisors of $n$, and
$$ \sum_{d\in B'} d - \sum_{d\in A'} d = \left(\sum_{d\in B} d + 3 + 5\right) - \left(\sum_{d\in A} d - 3 - 5\right) = 16 .$$
Thus, $n$ is a $16$-IGMO number. Hence, there exist infinitely many odd $16$-IGMO numbers.

\vskip 5pt\noindent {\tt Case 4:} $k=28$

Let $A'=A \setminus \{5,9\}$ and $B'=B \cup \{5,9\}$. Then $A'$ and $B'$ are disjoint subsets partitioning the divisors of $n$, and
$$ \sum_{d\in B'} d - \sum_{d\in A'} d = \left(\sum_{d\in B} d + 5 + 9\right) - \left(\sum_{d\in A} d - 5 - 9\right) = 28 .$$
Thus, $n$ is a $28$-IGMO number. Hence, there exist infinitely many odd $28$-IGMO numbers.

\vskip 5pt\noindent {\tt Case 5:} $k$ is in the form $4m+2$, where $m \in \mathbb{Z}_{\ge 0}$

By Lemma \ref{lem: zumkeller7}, there exist infinitely many odd IGMO-helpful Zumkeller numbers which are multiples of $\frac{k}{2}$ (which is an odd integer of the form $2m+1$). Let $n$ be such a multiple with Zumkeller partition $\{A,B\}$. Without loss of generality, assume $\frac{k}{2} \in B$. Let $A'=A \cup \{\frac{k}{2}\}$ and $B' = B \setminus \{\frac{k}{2}\}$. Then $A'$ and $B'$ are disjoint partitions of the divisors of $n$, and
$$ \sum_{d\in A'} d - \sum_{d\in B'} d = \left(\sum_{d\in A} d + \frac{k}{2}\right) - \left(\sum_{d\in B} d - \frac{k}{2}\right) = k. $$
Thus, $n$ is a $k$-IGMO number. Hence, there exist infinitely many odd $k$-IGMO numbers, where $k$ is in the form $4m+2$ and $m \in \mathbb{Z}_{\ge 0}$.

\vskip 5pt\noindent {\tt Case 6:} $k$ is in the form $4m$, where $m \in \mathbb{Z}^+ \setminus \{2,4,7\}$

By Lemma \ref{lem: zumkeller7}, there exist infinitely many odd IGMO-helpful Zumkeller numbers which are multiples of $\frac{k}{2}+1$, which is an odd integer. Let $n$ be such a multiple with IGMO-helpful Zumkeller partition $\{A,B\}$. Note that $\frac{k}{2}+1 \notin \{1,5,9,15\}$ due to the restrictions on $m$. Since $\frac{k}{2}+1$ is a positive divisor of $n$, it must belong to either $A$ or $B$.

If $\frac{k}{2}+1 \in A$, let $A'=(A \setminus \{\frac{k}{2}+1,5,9\}) \cup \{15\}$ and $B'=(B \setminus \{15\}) \cup \{\frac{k}{2}+1,5,9\}$. These sets are disjoint partitions of the divisors of $n$, and
$$ \sum_{d\in B'} d - \sum_{d\in A'} d = \left[\sum_{d\in B} d + \left(\frac{k}{2}+1\right)+5+9-15\right] - \left[\sum_{d\in A} d - \left(\frac{k}{2}+1\right)-5-9+15\right] = k .$$

If $\frac{k}{2}+1 \in B$, let $A'=(A \setminus \{1\}) \cup \{\frac{k}{2}+1\}$ and $B'=(B \setminus \{\frac{k}{2}+1\}) \cup \{1\}$. These sets are disjoint partitions of the divisors of $n$, and
$$ \sum_{d\in A'} d - \sum_{d\in B'} d = \left[\sum_{d\in A} d + \left(\frac{k}{2}+1\right)-1\right] - \left[\sum_{d\in B} d - \left(\frac{k}{2}+1\right)+1\right] = k .$$
In both cases, $n$ is a $k$-IGMO number. Hence, there exist infinitely many odd $k$-IGMO numbers, where $k$ is in the form $4m$ and $m \in \mathbb{Z}^+ \setminus \{2,4,7\}$.

\bigskip

Combining the cases, we prove that there exist infinitely many odd $k$-IGMO numbers for all even non-negative integers $k$.
\end{proof}

	\section{Concluding Remarks}\label{sec: summary}

 In this article, we have generalized the concept of IGMO numbers and systematically classified their existential properties. Our investigations yield the following main theorem.

\begin{theorem}\label{final}
There exist infinitely many odd $k$-IGMO numbers for all non-negative integers $k$.
\end{theorem}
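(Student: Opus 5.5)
The theorem follows by splitting on the parity of $k$ and combining the two main results proved in Sections \ref{sec: odd} and \ref{sec: even}. Every non-negative integer $k$ is either an odd positive integer or an even non-negative integer, and these two classes are exactly the hypotheses of Lemma \ref{lemma: IGMOodd} and Lemma \ref{lemma: evenIGMO}. No new construction is needed; the proof is purely an assembly step.

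First, suppose $k$ is odd. Then $k\ge 1$, so $k$ is an odd positive integer, and Lemma \ref{lemma: IGMOodd} directly yields infinitely many odd $k$-IGMO numbers. Concretely, these are $1334025\times 9^t$ for all sufficiently large $t$, via Lemma \ref{lemma: condition} for $k<22$ and Lemma \ref{lemma: k-IGMO+22} for $k>22$. For $k=21$ the table row applies, so every odd $k$ is covered; since $22$ is even, the threshold $22$ itself never arises. For odd $k>22$, one writes $k=x+22y$ with $x$ an odd integer in $[1,21]$ and $y\ge1$. This is always possible: $k-22y$ is odd for every $y$, so choosing $y$ with $1\le k-22y\le 21$ (i.e. $y=\lfloor (k-1)/22\rfloor$, which is at least $1$ when $k>22$) gives a valid $x$.

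Second, suppose $k$ is even and non-negative. Lemma \ref{lemma: evenIGMO} gives infinitely many odd $k$-IGMO numbers. Its case analysis covers $k=0$, $8$, $16$, $28$, then $k\equiv 2 \pmod 4$, and then $k=4m$ with $m\notin\{2,4,7\}$. I will check that these cases exhaust all even $k\ge 0$. The multiples of $4$ are $0$, together with $4m$ for $m\ge1$; the excluded values $m=2,4,7$ correspond exactly to $8$, $16$, $28$, which are handled separately. The remaining even numbers are those $\equiv 2 \pmod 4$.

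The only potential obstacle is making sure the earlier case splits really are exhaustive, namely the odd $k$ around the value $22$ and the special even values. Both are routine to confirm as above, so combining the two lemmas completes the proof of Theorem \ref{final}.
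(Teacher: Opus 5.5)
Your proposal is correct and is essentially the paper's proof, which splits on the parity of $k$ and invokes Lemma~\ref{lemma: IGMOodd} for odd $k$ and Lemma~\ref{lemma: evenIGMO} for even $k$. Your extra exhaustiveness checks are accurate: writing odd $k>22$ as $x+22y$ with $y=\lfloor (k-1)/22\rfloor$, and covering even $k$ by the cases $0,8,16,28$, $k\equiv 2 \pmod 4$, and $4m$ with $m\notin\{2,4,7\}$. The paper leaves these details implicit inside the two lemmas.
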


\begin{proof}
The result follows immediately from the combination of Lemma \ref{lemma: IGMOodd} (for all odd positive integers $k$) and Lemma \ref{lemma: evenIGMO} (for all even non-negative integers $k$).
\end{proof}

We conclude this article by further generalizing the concept of $k$-IGMO numbers and propose the following open problem, which offers a compelling direction for future research.

\begin{problem}\label{multi-partite}
Let $n$ be a positive integer, and let $D_n$ denote its set of positive divisors. We define $n$ to be a $(k_1, k_2, \dots, k_{m-1})$-IGMO number if $D_n$ can be partitioned into $m$ mutually disjoint subsets $A_1, A_2, \dots, A_m$ such that $\bigcup_{i=1}^m A_i = D_n$, and for all $1 \leq i \leq m-1$,
\[ \sum_{d\in A_{i+1}} d - \sum_{d\in A_i} d = k_i. \]
Characterize the combinations of $(k_1, k_2, \dots, k_{m-1})$ such that there exist infinitely many odd or even $(k_1, k_2, \dots, k_{m-1})$-IGMO numbers.

\end{problem}

\vskip20pt\noindent {\bf Acknowledgements.} The author thanks Sai Teja Somu and Andrzej Kukla for reviewing the manuscript and providing insightful suggestions.


\begin{thebibliography}{1}\footnotesize

\bibitem{IGMO} Wassupevery1, 4th IGMO is here!, Arts of Problem Solving Forums. (accessed April 12, 2026) {\tt https://artofproblemsolving.com/community/q1h3501899p34024733}.

\bibitem{IGMO_numbers} Wassupevery1, IGMO numbers, Arts of Problem Solving Forums. (accessed April 12, 2026) {\tt https://artofproblemsolving.com/community/q1h3514527p34112137}.

\bibitem{Pepemaths} Pepemaths, Thank you for supporting IGMO!!!, Instagram. (accessed April 12, 2026) {\tt https://www.instagram.com/pepemaths/DGioaV0JnEq}.

\bibitem{Zumkeller_numbers} Y. Peng and K.P.S. Bhaskara Rao, On Zumkeller numbers, {\it Journal of Number Theory} {113} (4) (2013), 1135--1155.

\bibitem{integer_perfect} M. O. LeVan, Integer-perfect numbers, {\it Journal of Natural Sciences and Mathematics} {27} (2) (1987), 33--50.

\bibitem{OEIS} OEIS Foundation Inc.,  The On-Line Encyclopedia of Integer Sequences, {\tt https://oeis.org}.

\bibitem{brown} J. L. Jr. Brown, Notes on Complete Sequences of Integers, {\it Amer. Math. Monthly} {68} (1961), 557--560.

\end{thebibliography}
\end{document}